\newtheorem{thm}{Theorem}[section]
\newtheorem{lem}[thm]{Lemma}
\newtheorem{prop}[thm]{Proposition}
\theoremstyle{definition}
  \newtheorem{defn}[thm]{Definition}
\title{The third symmetric potency of the circle and the Barnette sphere}
\author{Yuki Nakandakari \and Shuichi Tsukuda}
\newcommand{\relmid}{\mathrel{}\middle|\mathrel{}}
\newcommand{\Ex}[1]{E(#1)}
\newcommand{\Ix}[1]{I(#1)}
\newcommand{\spot}[2][k]{(#2)^{(#1)}}
\newcommand{\spotc}[1][3]{(S^1)^{(#1)}}
\newcommand{\card}[1]{|#1|}
\newcommand{\face}[1]{[#1]}
\tikzset{%
 font=\bfseries\upshape,
 face/.style={fill opacity=0.65},
 frame/.style={thin, draw=black},
 thickedge/.style={preaction={draw, line width=4pt, white, opacity=0.8}, line width=3pt},
 ax/.style={->},
}
\begin{document}

\maketitle

\renewcommand{\thefootnote}{}
\footnote{2010 \emph{Mathematics Subject Classification}: Primary 54B20, 55U10; Secondary 57M25.}
\footnote{\emph{Key words and phrases}: Finite subset spaces, symmetric product, trefoil knot.}

\begin{abstract}
 We give an elementary (not cut just paste) proof of results of Bott and Shchepin:
the space of non-empty subsets of a circle of cardinality at most 3, 
which is called the third symmetric potency of the circle,
is homeomorphic to a 3-sphere and the inclusion of 
the space of one element subsets is a trefoil knot. 
Moreover, we give an explicit simplicial decomposition of 
the third symmetric potency of the circle which is isomorphic to the Barnette sphere.
\end{abstract}

\section{Introduction}
The space of non-empty finite subsets of a topological space 
of cardinality at most $k$ have been studied in various areas of mathematics 
under various names and notations. 
Since our purpose of this paper is to study 
the case of the circle and $k=3$, which seems to be the origin of 
the study of these spaces initiated by 
K.~Borsuk \cite{borsuk_third_1949} and R.~Bott \cite{bott_third_1952}, 
we use the original name and notation.

\begin{defn}
 Let $X$ be a topological space and $k$ be a positive integer.
 The set of non-empty subsets of $X$ of cardinality at most $k$ 
 equipped with the quotient topology given by the canonical projection 
 from the $k$-fold Cartesian product is called the $k$-th symmetric potency of $X$ 
 and denoted by $\spot{X}$. 
 That is, as a set, 
 \[
  \spot{X}=\left\{ A\subset X \relmid 0<\card{A}\leq k \right\}
 \]
 where $\card{A}$ denotes the cardinality of $A$,  
 and the topology is induced by the projection
 \[
  \pi\colon X^k \to \spot{X}
 \]
 given by $\pi(x_1,\dots,x_k)=\{x_1,\dots, x_k\}$.
\end{defn}

Note that we have canonical inclusions
$\spot[1]{X}\subset \spot[2]{X} \subset \spot[3]{X} \cdots$, 
and $\spot[1]{X}$ is identified with $X$ by the projection $\pi\colon X \to \spot[1]{X}$.
In \cite{bott_third_1952}, Bott identified the topology of $\spotc$ 
and E.~V.~Schepin did the inclusion $S^1=\spotc[1]\subset \spotc$.

\begin{thm}[R.~Bott \cite{bott_third_1952}]
 \label{main1}
 $\spotc$ is homeomorphic to $S^3$.
\end{thm}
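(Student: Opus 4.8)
The plan is to realize $\spotc$ as two solid tori glued along their common boundary torus --- a genus-one Heegaard splitting --- and then to identify the gluing so that $S^3$ is the only possibility.

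First I would check that $\spotc$ is a compact connected orientable $3$-manifold. Connectedness is immediate, $\pi$ being a continuous surjection from the connected space $(S^1)^3$. For the local structure, take $A\in\spotc$ and distinguish cases by $\card A$: if $\card A=3$ a small neighbourhood is plainly $\mathbb R^3$; if $\card A=2$, say $A=\{z,w\}$, a neighbourhood consists of the subsets $B_z\sqcup B_w$ with $B_z,B_w$ nonempty near $z,w$ and $\card{B_z}+\card{B_w}\le 3$, which is the union of two closed half-spaces glued along the $2$-plane $\{\card{B_z}=\card{B_w}=1\}$, hence again $\mathbb R^3$; and if $\card A=1$ a neighbourhood is $\spot[3]{\mathbb R}$, and under the sorting identification $\mathbb R^3/S_3=\{x\le y\le z\}$ the passage to \emph{subsets} (forgetting multiplicities) further glues the two facets $\{x=y\le z\}$ and $\{x\le y=z\}$ by $(a,a,b)\mapsto(a,b,b)$, so that folding the dihedral wedge along its edge $\{x=y=z\}$ again yields $\mathbb R^3$. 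Thus $\spotc$ is a closed $3$-manifold, orientable by inspection of these local models.

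Next, fix a small $\epsilon>0$ and set
\[
 \mathcal N=\{A\in\spotc : \operatorname{diam}A\le\epsilon\},\qquad
 \mathcal F=\{A\in\spotc : \operatorname{diam}A\ge\epsilon\}.
\]
Sending $A$ to the midpoint of its minimal containing arc (unique since $\operatorname{diam}A<\tfrac12$) realizes $\mathcal N$ as a fibre bundle over $\spotc[1]\cong S^1$ whose fibre --- the ``triangle'' of subsets centred at a fixed point, two of whose sides are glued onto the same arc of pairs --- is a $2$-disk; orientability forces the bundle to be trivial, so $\mathcal N$ is a solid torus, namely a tubular neighbourhood of the circle $\spotc[1]$. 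Dually $\mathcal F$, the space of ``spread-out'' subsets, is connected and has the homotopy type of a circle (it deformation retracts onto the circle of maximally spread configurations), so, being a compact orientable $3$-manifold with torus boundary and $\pi_1\cong\mathbb Z$, it is a solid torus. Finally $\mathcal N\cap\mathcal F=\{\operatorname{diam}A=\epsilon\}$ is a torus: it is parametrized by $(z,t)\mapsto\{z,\,z+t,\,z+\epsilon\}$ on $S^1\times[0,\epsilon]$, with $(z,0)$ and $(z,\epsilon)$ both giving the pair $\{z,z+\epsilon\}$, so it is a cylinder with its two ends glued by the identity. Hence $\spotc=\mathcal N\cup_{T^2}\mathcal F$ is a genus-one Heegaard splitting, and $\spotc$ is $S^3$, $S^2\times S^1$, or a lens space.

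It remains to rule out $S^2\times S^1$ and the lens spaces, i.e.\ to show $H_1(\spotc)=0$ (equivalently $\pi_1(\spotc)=1$). I would do this by locating, on the splitting torus, the meridian of $\mathcal N$ --- the boundary of a triangular fibre, the loop $s\mapsto\{c-\tfrac\epsilon2,\,s,\,c+\tfrac\epsilon2\}$ with its endpoints identified --- together with a meridian of $\mathcal F$ (described by an analogous explicit family, e.g.\ subsets of diameter $\ge\epsilon$ through a fixed point), and checking that these two curves meet once; then the Mayer--Vietoris map $H_1(T^2)\to H_1(\mathcal N)\oplus H_1(\mathcal F)$ is onto and $H_1(\spotc)=0$, whence $\spotc\cong S^3$. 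I expect this last step to be the main obstacle: it requires a concrete hold on the solid torus $\mathcal F$ and on the position of its meridian, which is essentially a computation of $\pi_1(\spotc)$. A genuinely elementary, ``paste, don't cut'' alternative is to avoid the classification of $3$-manifolds entirely: equip $\spotc$ with an explicit simplicial structure and recognize it as a known triangulated $3$-sphere, such as the Barnette sphere studied below.
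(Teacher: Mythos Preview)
Your approach is genuinely different from the paper's and is, in outline, the one taken by Tuffley. The paper never passes through a Heegaard splitting or any classification result for $3$-manifolds; instead it works with Morton's prism $P=\{x\le y\le z\le x+1,\ 0\le x+y+z\le 1\}$, shows that the map $(x,y,z)\mapsto\{e^{2\pi ix},e^{2\pi iy},e^{2\pi iz}\}$ exhibits $\spotc$ as $P$ with four explicit affine face-identifications (Lemma~\ref{identifytopbottom}), and then --- after inserting a handful of extra vertices --- recognises the resulting simplicial complex as the Barnette sphere. This is precisely the ``paste, don't cut'' alternative you allude to in your last sentence. Its advantages are that no $3$-manifold theory is invoked (not even the manifold structure of $\spotc$), and that the trefoil and M\"obius-band statements about $\spotc[1]\subset\spotc[2]\subset\spotc$ fall out of the same picture at no extra cost.

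Your Heegaard argument, by contrast, has a real gap, which you yourself flag: you never locate the meridian of $\mathcal F$ on the splitting torus, so the intersection-number check that would force $H_1(\spotc)=0$ is simply not carried out; without it you are left with $S^3$, $S^2\times S^1$, or a lens space and nothing more. There are also softer spots upstream. ``Orientable by inspection of these local models'' is not a proof: your charts are not shown to be coherently oriented, and the folding at the $\card A=1$ locus makes this non-obvious. The deformation retraction of $\mathcal F$ onto the circle of equilateral configurations is asserted rather than constructed; note that $\mathcal F$ contains $2$-element subsets of every diameter in $[\epsilon,\tfrac12]$, and your retraction must continuously split these into three points. Finally, ``compact orientable with torus boundary and $\pi_1\cong\mathbb Z$ implies solid torus'' already needs Dehn's lemma (and, strictly, irreducibility or the Poincar\'e conjecture), which is exactly the kind of heavy machinery the paper's approach is designed to avoid.
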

\begin{thm}[E.~V.~Schepin \cite{Shchepin}]
 \label{main2}
 The inclusion $S^1=\spotc[1]\subset \spotc\cong S^3$ is a trefoil knot.
\end{thm}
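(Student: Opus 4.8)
The plan is to leverage the homeomorphism of Theorem~\ref{main1} to see the knot explicitly and then to identify it by an elementary linking computation. Fix a homeomorphism $h\colon\spotc\xrightarrow{\ \cong\ }S^3$, for definiteness the one constructed in the proof of Theorem~\ref{main1}, and set $K=h(\spotc[1])$, the knot under study. The first step is structural and model-free: writing $S^1=\mathbb R/2\pi\mathbb Z$, the space $\spotc[2]$ is the quotient of the torus $S^1\times S^1$ by the coordinate swap $\tau(x,y)=(y,x)$; since $\tau$ is an orientation-reversing involution whose fixed set is the single diagonal circle, the quotient $\spotc[2]$ is a compact surface with $\chi=0$ and one boundary circle, hence a Möbius band $B$, and its manifold boundary is the image of the diagonal, namely $\spotc[1]$ itself. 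Thus $K$ bounds the embedded Möbius band $h(B)\subset S^3$, whose core circle $C$ is $h$ applied to the image of the curve $\{(x,x+\pi)\}\subset S^1\times S^1$ — the locus of $\tau$-antipodal pairs, which is the central circle of $B$ (the image of the line of points farthest from the diagonal).

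Now invoke the classical fact that a knot which bounds an embedded Möbius band in $S^3$ with unknotted core is precisely the $(2,2m+1)$-torus knot, where the odd integer $2m+1$ is the number of half-twists of the band and can be recovered as $\pm\operatorname{lk}(K,C)$. The proof then comes down to two checks inside the explicit model: (i) that $C$ is an unknot, which should be transparent — e.g.\ by producing a spanning disk for $C$ built out of subsets of cardinality three, or by identifying $C$ with a core curve of one of the two solid tori in a genus-one Heegaard splitting of $\spotc$; and (ii) that $\operatorname{lk}(K,C)=\pm3$, obtained by picking a convenient Seifert surface for $C$ in $\spotc$ and counting its signed intersections with the singleton circle $\spotc[1]$. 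Together, (i) and (ii) force $K$ to be the $(2,3)$-torus knot, i.e.\ the trefoil, which is Theorem~\ref{main2}.

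The crux is step (ii): the arguments up to that point only show $K$ is \emph{some} $(2,2m+1)$-torus knot — in particular they do not by themselves exclude the unknot ($m=0$) — so the trefoil is pinned down only by the twist count, which is exactly where the concrete geometry of $\spotc$, rather than any soft argument, must enter. If organizing that linking count turns out to be delicate, a robust substitute is to compute $\pi_1(S^3\setminus K)$ directly from the cell (or simplicial) structure of $\spotc$ furnished by Theorem~\ref{main1} by the Wirtinger/van~Kampen method and to match it with the trefoil group $\langle a,b\mid aba=bab\rangle$; extracting the Alexander polynomial $t^2-t+1$, or just the knot determinant $3$, then simultaneously rules out the unknot and every other torus knot. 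Minor care is also needed in step~(i) and in verifying that $h$ carries $B=\spotc[2]$ to a locally flat (or PL) embedded Möbius band, but both are routine once the model is fixed.
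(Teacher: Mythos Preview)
Your outline correctly isolates the structural core that the paper also uses: $\spotc[2]$ is a M\"obius band with manifold boundary $\spotc[1]$, so the knot type is governed by the core of that band and the number of half-twists. From this shared starting point, however, the paper takes a wholly constructive route rather than your invariant-theoretic one. It builds an explicit simplicial model of $\spotc$ (Morton's prism with boundary identifications, refined to a triangulation isomorphic to the Barnette sphere) in which $\spotc[2]$ appears as a concrete $2$-subcomplex on six vertices and six triangles, and one simply reads off from the combinatorics that the band sits with three half-twists. No linking number, no Seifert surface, no $\pi_1$ computation is invoked; the trefoil is visible.

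Your proposal, by contrast, defers exactly the decisive step. You reduce to (i) the core $C$ being unknotted and (ii) $\operatorname{lk}(K,C)=\pm 3$, but you carry out neither: (i) ``should be transparent'' and (ii) is ``the crux,'' with a $\pi_1$/Alexander fallback also left unexecuted. Everything you write is correct and the strategy would succeed, but as written it is an outline, not a proof: the entire content of the theorem---that the twist is three rather than one or five---lives in the computation you have postponed. Note also that to perform (i) and (ii) concretely you will in practice need to choose a model of $\spotc$ and locate $K$ and $C$ in it; once you have done that, the paper's direct twist count is at hand and the detour through linking numbers or knot groups becomes unnecessary.
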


Various proofs of these Theorems are known such as given by 
S.~Kallel and D.~Sjerve \cite{kallel_remarks_2009},
J.~Mostovoy \cite{mostovoy_lattices_2004},
C.~Tuffley \cite{tuffley_finite_2002}.
In this paper, we give an elementary (not cut just paste) proof of 
Theorems~\ref{main1} and \ref{main2}. 
Moreover, we give an explicit simplicial decomposition of $\spotc$ 
which is isomorphic to the Barnette sphere \cite{barnette}.

\section{Pasting}
Fundamental topological properties of symmetric potencies 
are studied by D.~Handel \cite{handel_homotopy_2000}, and
the following can be found as \cite[Proposition 2.7]{handel_homotopy_2000}.
For the sake of completeness, we give a proof
which is essentially the same as that of \cite[Proposition 2.7]{handel_homotopy_2000}.

\begin{prop}
If $X$ is Hausdorff, then so is $\spot{X}$.
\end{prop}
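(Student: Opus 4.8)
The plan is to separate two distinct points $A,B\in\spot{X}$ by disjoint open sets built from \emph{Vietoris-type} subsets of $\spot{X}$ whose $\pi$-preimages are manifestly open in $X^k$. For finitely many open sets $U_1,\dots,U_m\subseteq X$ I would introduce
\[
 \langle U_1,\dots,U_m\rangle:=\left\{ C\in\spot{X}\relmid C\subseteq\bigcup_{i}U_i\ \text{and}\ C\cap U_i\neq\emptyset\ \text{for every }i\right\},
\]
and the first step is to check that each such set is open. Since $\spot{X}$ carries the quotient topology, this reduces to showing that $\pi^{-1}\langle U_1,\dots,U_m\rangle$ is open in $X^k$; and a direct unwinding of the definitions gives
\[
 \pi^{-1}\langle U_1,\dots,U_m\rangle=\Bigl(\bigcup_{i}U_i\Bigr)^{k}\cap\bigcap_{i=1}^{m}\ \bigcup_{j=1}^{k}X^{\,j-1}\times U_i\times X^{\,k-j},
\]
a finite intersection of open subsets of $X^k$.

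The second step is the separation itself. Since $A\neq B$, after interchanging $A$ and $B$ if necessary we may fix a point $a\in A\setminus B$; write $B=\{b_1,\dots,b_m\}$ with the $b_j$ pairwise distinct and $m\leq k$. Because $X$ is Hausdorff I can choose, for each $j$, disjoint open sets $R_j\ni a$ and $P_j\ni b_j$ in $X$; then $O:=\bigcap_{j=1}^{m}R_j$ is an open neighbourhood of $a$ with $O\cap\bigcup_j P_j=\emptyset$. Now set $W_A:=\{\,C\in\spot{X}: C\cap O\neq\emptyset\,\}=\langle O,X\rangle$ and $W_B:=\langle P_1,\dots,P_m\rangle$. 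By the first step both are open in $\spot{X}$; clearly $A\in W_A$ and $B\in W_B$; and they are disjoint, since any $C$ lying in both would contain a point of $O$ which, as $C\subseteq\bigcup_j P_j$, must lie in some $P_j$, contradicting $O\cap P_j=\emptyset$.

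I expect the only point needing care to be the bookkeeping in the first step — computing $\pi^{-1}\langle U_1,\dots,U_m\rangle$ correctly and remembering that openness in $\spot{X}$ \emph{means} openness of the $\pi$-preimage. Everything else is a single, direct appeal to the Hausdorff axiom for $X$; in particular the argument uses neither Hausdorffness of $X^k$ nor any finiteness or compactness of the fibres of $\pi$. (One could instead argue that $\pi$ is a closed surjection with finite, hence closed, fibres and then separate $\pi^{-1}(A)$ from $\pi^{-1}(B)$ inside $X^k$, but the construction above is more direct and is the one I would write up.)
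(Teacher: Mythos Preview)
Your proof is correct and follows essentially the same approach as the paper's: define Vietoris-type subsets of $\spot{X}$, check openness by computing $\pi$-preimages, and separate $A$ from $B$ by an ``intersects $O$'' neighbourhood of $A$ against a containment neighbourhood of $B$. The only difference is that the paper works with the simpler special cases $I(U)=\langle U,X\rangle$ and $E(V)=\langle V\rangle$ (taking a single open $V\supseteq B$ disjoint from $U\ni a$, so that $\pi^{-1}(E(V))=V^k$ is immediate), rather than your finer $\langle P_1,\dots,P_m\rangle$.
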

\begin{proof}
 Let $\pi\colon X^k \to \spot{X}$ be the canonical projection.

 For a subset $S\subset X$, we define subsets $\Ex{S}$ and $\Ix{S}$ of $\spot{X}$ as follows:
 \begin{align*}
  \Ex{S}&:=\left\{A\in \spot{X} \relmid A\subset S\right\} \\
  \Ix{S}&:=\left\{A\in \spot{X} \relmid A\cap S \ne \emptyset\right\}
 \end{align*}
 Note that
 $\pi^{-1}(\Ex{S})=S^k$
 and 
 $\Ix{S}^c=\Ex{S^c}$.
 In particular,
 if $S$ is open (resp.\ closed), then so is $\Ex{S}$ and hence so is $\Ix{S}$.
 
 Let $x=\{x_1,\dots,x_l\}, y=\{y_1,\dots,y_m\}\in \spot{X}$ be two distinct points.
 We may assume that $x_1\not\in y$. Since $X$ is Hausdorff, we can find
 open subsets $U,V\subset X$ satisfying
 $x_1\in U$, $y=\{y_1,\dots,y_m\}\subset V$ and $U\cap V = \emptyset$.
 Then $\Ix{U}$ and $\Ex{V}$ are open and 
 $x\in \Ix{U}$, $y \in \Ex{V}$.
 Since $V\subset U^c$, $\Ex{V}\subset \Ex{U^c} = \Ix{U}^c$ whence $\Ix{U}\cap \Ex{V} = \emptyset$.
\end{proof}

Our starting point is Morton's prism. 
Consider the following subspaces of $\mathbb{R}^3$ (see Figure~\ref{figureP}):
\begin{align*}
 P  &=\left\{(x, y, z)\in\mathbb{R}^3 \relmid x\leq y \leq  z\leq  x+1  \text{ and }  0\leq x+y+z\leq 1\right\} \\
 S & =\left\{(x, y, z)\in P \relmid x=y \text{ or } y=z \text{ or } z=x+1 \right\} \\
 D & =\left\{(x, y, z)\in P \relmid x=y=z \text{ or } x=y=z-1 \text{ or } x+1=y=z\right\}
\end{align*}
The vertices of $P$ are the following:
\begin{align*}
 \mathbf{0}&: (0,0,0) & \mathbf{1}&: (-2/3,1/3,1/3) & \mathbf{2}&: (-1/3,-1/3,2/3) \\
 \mathbf{4}&: (0,0,1) & \mathbf{5}&: (1/3,1/3,1/3) & \mathbf{6}&: (-1/3,2/3,2/3) 
\end{align*}

H.~R.~Morton \cite{morton_symmetric_1967} used the prism $P$ to describe symmetric products of the circle: 
the symmetric product $\left(S^1\right)^3/\mathfrak{S}_3$ is 
obtained from $P$ by identifying the bottom face $\face{0,1,2}$ and the 
top face $\face{4,5,6}$, hence homeomorphic to a solid torus.

\begin{figure}
\tikzset{face/.append style={fill opacity=0.9}}
\definecolor{mycolor0}{RGB}{255,153,153}
\definecolor{mycolor1}{RGB}{235,255,153}
\definecolor{mycolor2}{RGB}{153,255,193}
\definecolor{mycolor3}{RGB}{153,193,255}
\definecolor{mycolor4}{RGB}{234,153,255}
 \newcommand*{\mylabel}[1]{%
 \ifcase #1 0 \or 1 \or 2 \or 5 \or 6 \or 4 \or \fi}
\begin{minipage}[b]{0.32\hsize}
 \centering
 \begin{tikzpicture}[scale=0.8]
 \coordinate (e0) at (0, 0);
 \coordinate (e1) at (-1.12, -0.45);
 \coordinate (e2) at (2.5, -0.34);
 \coordinate (e3) at (0.05, 4.4);
 \draw[ax] (e0) -- (e1);
 \draw[ax] (e0) -- (e2);
 \coordinate (0) at (0, 0);
 \coordinate (1) at (1.63, 1.09);
 \coordinate (2) at (-0.33, 2.03);
 \coordinate (3) at (0.58, 0.96);
 \coordinate (4) at (2.6, 2.2);
 \coordinate (5) at (0.04, 3.47);
 \node at (1) [below right] {\mylabel{1}};
 \node at (2) [left]{\mylabel{2}};
 \draw[frame, face, fill=mycolor0] (0)--(1)--(2)--cycle;
 \draw[frame, face, fill=mycolor3] (1)--(2)--(5)--(4)--cycle;
 \draw[ax] (e0) -- (e3);
 \node at (0) [below] {\mylabel{0}};
 \draw[frame, face, fill=mycolor2] (0)--(1)--(4)--(3)--cycle;
 \draw[frame, face, fill=mycolor1] (0)--(3)--(5)--(2)--cycle;
 \node at (4) [right] {\mylabel{4}};
 \node at (5) [left] {\mylabel{5}};
 \draw[frame, face, fill=mycolor4] (3)--(4)--(5)--cycle;
 \node at (3) [left] {\mylabel{3}};
 \end{tikzpicture}
\end{minipage}
\begin{minipage}[b]{0.32\hsize}
 \centering
 \begin{tikzpicture}[scale=0.8]
 \coordinate (e0) at (0, 0);
 \coordinate (e1) at (-1.12, -0.45);
 \coordinate (e2) at (2.5, -0.34);
 \coordinate (e3) at (0.05, 4.4);
 \draw[ax] (e0) -- (e1);
 \draw[ax] (e0) -- (e2);
 \coordinate (0) at (0, 0);
 \coordinate (1) at (1.63, 1.09);
 \coordinate (2) at (-0.33, 2.03);
 \coordinate (3) at (0.58, 0.96);
 \coordinate (4) at (2.6, 2.2);
 \coordinate (5) at (0.04, 3.47);
 \node at (1) [below right] {\mylabel{1}};
 \node at (2) [left]{\mylabel{2}};
 \draw[frame, face, fill=mycolor3] (1)--(2)--(5)--(4)--cycle;
 \draw[ax] (e0) -- (e3);
 \node at (0) [below] {\mylabel{0}};
 \draw[frame, face, fill=mycolor2] (0)--(1)--(4)--(3)--cycle;
 \draw[frame, face, fill=mycolor1] (0)--(3)--(5)--(2)--cycle;
 \node at (4) [right] {\mylabel{4}};
 \node at (5) [left] {\mylabel{5}};
 \node at (3) [left] {\mylabel{3}};
 \end{tikzpicture}
\end{minipage}
\begin{minipage}[b]{0.32\hsize}
 \centering
 \begin{tikzpicture}[scale=0.8]
 \coordinate (e0) at (0, 0);
 \coordinate (e1) at (-1.12, -0.45);
 \coordinate (e2) at (2.5, -0.34);
 \coordinate (e3) at (0.05, 4.4);
 \draw[ax] (e0) -- (e1);
 \draw[ax] (e0) -- (e2);
 \draw[ax] (e0) -- (e3);
 \coordinate (0) at (0, 0);
 \coordinate (1) at (1.63, 1.09);
 \coordinate (2) at (-0.33, 2.03);
 \coordinate (3) at (0.58, 0.96);
 \coordinate (4) at (2.6, 2.2);
 \coordinate (5) at (0.04, 3.47);
 \draw[thickedge] (0) -- (3);
 \draw[thickedge] (1) -- (4);
 \draw[thickedge] (2) -- (5);
 \node at (1) [below right] {\mylabel{1}};
 \node at (2) [left]{\mylabel{2}};
 \node at (0) [below] {\mylabel{0}};
 \node at (4) [right] {\mylabel{4}};
 \node at (5) [left] {\mylabel{5}};
 \node at (3) [left] {\mylabel{3}};
 \end{tikzpicture}
\end{minipage}
\caption{$P,S$ and $D$}
\label{figureP}
\end{figure}

Let $p\colon P \to \spotc$ be the map defined by 
$p(x,y,z)=\{e^{2\pi ix},e^{2\pi iy},e^{2\pi iz}\}$.
Define a equivalence relation on $P$ by 
$(x,y,z)\sim (x',y',z') \Leftrightarrow p(x,y,z)=p(x',y',z')$
and give $P/{\sim}$ the quotient topology.
Let $\bar{p}$ be the induced map:
\[
\bar{p}\colon P/{\sim} \rightarrow \spotc
\]

\begin{prop}
The map $\bar{p}\colon P/{\sim} \rightarrow \spotc$ is a homeomorphism. 
Moreover, restrictions of $\bar{p}$ give homeomorphisms 
$S/{\sim}  \rightarrow \spotc[2]$ and 
$D/{\sim} \rightarrow \spotc[1]$:
\[
\xymatrix{
& D/{\sim} \ar@{}[r]|*{\subset} \ar[d]^{\bar{p}}_{\cong} & S/{\sim} \ar@{}[r]|*{\subset} \ar[d]^{\bar{p}}_{\cong} & 
P/{\sim} \ar[d]^{\bar{p}}_{\cong} \\
S^1 \ar@{=}[r] & \spotc[1] \ar@{}[r]|*{\subset} & \spotc[2] \ar@{}[r]|*{\subset} & \spotc 
}
\]
\end{prop}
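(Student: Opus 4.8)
The plan is to exhibit $\bar p$ and each of its two restrictions as a continuous bijection from a compact space onto a Hausdorff space, and then invoke the standard fact that such a map is a homeomorphism. Well-definedness and injectivity of $\bar p$ are built into the construction: $\bar p([u])=\bar p([v])$ says $p(u)=p(v)$, i.e.\ $u\sim v$, i.e.\ $[u]=[v]$. The prism $P$ is cut out of $\mathbb R^3$ by finitely many non-strict linear inequalities, hence is compact, and $S$ and $D$ are closed in $P$ (each is $P$ intersected with a finite union of affine subspaces), so $P/{\sim}$, $S/{\sim}$, $D/{\sim}$ are all compact. Since $S^1$ is Hausdorff, $\spotc$ is Hausdorff by the preceding Proposition, and hence so are $\spotc[2]$ and $\spotc[1]=S^1$. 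The map $p$ is continuous, being the composite $P\to(S^1)^3\xrightarrow{\pi}\spotc$, so $\bar p$ is continuous by the universal property of the quotient $\pi_{\sim}\colon P\to P/{\sim}$; continuity of the restriction to $S/{\sim}$ follows by covering $S$ with its three closed faces $\{x=y\}$, $\{y=z\}$, $\{z=x+1\}$, on each of which $p$ admits a continuous lift to $(S^1)^2$ (for instance $(x,y,z)\mapsto(e^{2\pi iy},e^{2\pi iz})$ on $\{x=y\}$ and $(x,y,z)\mapsto(e^{2\pi ix},e^{2\pi iy})$ on $\{z=x+1\}$), and similarly, trivially, for $D$. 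So everything reduces to surjectivity, plus checking the restrictions have the stated codomains.

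For $\bar p$: given $A\in\spotc$, list its points as $A=\{e^{2\pi i\theta_1},e^{2\pi i\theta_2},e^{2\pi i\theta_3}\}$ (repetitions allowed) with $\theta_1\le\theta_2\le\theta_3\le\theta_1+1$, which is possible since $A$ consists of at most three points of the circle. The three cyclically shifted triples $(\theta_1,\theta_2,\theta_3)$, $(\theta_2,\theta_3,\theta_1+1)$, $(\theta_3,\theta_1+1,\theta_2+1)$ each satisfy the chain of inequalities defining $P$ and each map to $A$ under $p$, and their coordinate sums are $\sigma,\sigma+1,\sigma+2$ where $\sigma=\theta_1+\theta_2+\theta_3$. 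Translating any one of them by $(n,n,n)$ with $n\in\mathbb Z$ preserves the $P$-inequalities and the value under $p$ while changing the sum by $3n$; since $\{\sigma,\sigma+1,\sigma+2\}+3\mathbb Z=\sigma+\mathbb Z$, some such translate has coordinate sum in $[0,1]$ and hence lies in $P$. So $\bar p$ is onto.

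The restrictions rest on the identities $S=\{(x,y,z)\in P:\card{p(x,y,z)}\le 2\}$ and $D=\{(x,y,z)\in P:\card{p(x,y,z)}=1\}$. The inclusions ``$\subseteq$'' are clear, since each of $x=y$, $y=z$, $z=x+1$ forces two of $e^{2\pi ix},e^{2\pi iy},e^{2\pi iz}$ to coincide (and similarly for the three equations defining $D$). For ``$\supseteq$'' one observes that if two of the circle coordinates coincide then two of $x,y,z$ differ by an integer, and a short case analysis using $x\le y\le z\le x+1$ shows the only possibilities are exactly the equations listed for $S$ (resp.\ $D$). Consequently $S$ and $D$ are $\sim$-saturated, the inclusions $D\hookrightarrow S\hookrightarrow P$ induce the injections in the top row of the diagram, and $\bar p$ maps $S/{\sim}$ into $\spotc[2]$ and $D/{\sim}$ into $\spotc[1]$; surjectivity onto these is the computation above specialized to $\card A\le 2$ and $\card A\le 1$, where the three cyclic triples produced are automatically degenerate in the way that puts them in $S$ (resp.\ $D$)---for instance, if $A=\{e^{2\pi i\alpha},e^{2\pi i\beta}\}$ with $\alpha\le\beta\le\alpha+1$, the triples $(\alpha,\alpha,\beta)$, $(\alpha,\beta,\alpha+1)$, $(\beta,\alpha+1,\alpha+1)$ satisfy $x=y$, $z=x+1$, $y=z$. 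Applying ``a continuous bijection from a compact space to a Hausdorff space is a homeomorphism'' to $\bar p$, $\bar p|_{S/{\sim}}$ and $\bar p|_{D/{\sim}}$ gives the three homeomorphisms, and the diagram commutes since every arrow is a restriction of $\bar p$ or an inclusion. I expect the only genuine work to be the normalization bookkeeping in the surjectivity step and the short case analysis behind the identities for $S$ and $D$; the rest is formal.
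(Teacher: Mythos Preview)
Your proof is correct and follows essentially the same approach as the paper: show $\bar p$ is a continuous bijection from a compact space to a Hausdorff space, and identify $S=p^{-1}\bigl(\spotc[2]\bigr)$, $D=p^{-1}\bigl(\spotc[1]\bigr)$. You have simply written out in detail what the paper dismisses as ``easy to see'' and ``straightforward''; the cyclic-shift-plus-integer-translation argument for surjectivity is exactly the bookkeeping one would expect, and the extra care you take with continuity of the restriction to $S/{\sim}$ (lifting to $(S^1)^2$) is harmless but unnecessary, since the restriction of the continuous $\bar p$ to the subspace $\spotc[2]\subset\spotc$ is automatically continuous.
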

\begin{proof}
 It is easy to see that the map $p\colon P \to \spotc$ is surjective. 
 Since $P$ is compact and $\spotc$ is Hausdorff, the map $\bar{p}$ is a homeomorphism. 
 It is straightforward to see that 
 $p^{-1}\left(\spotc[2]\right)=S$, $p^{-1}\left(\spotc[1]\right)=D$, and the rest of the assertions follow.
\end{proof}

It is straightforward to see the following.
\begin{lem}
 \label{identifytopbottom}
 The equivalence relation $\sim$ is given by affinely identifying
 the following triangles \textup{(}$2$-simplices\textup{)} of the boundary of the prism $P$:

\begin{minipage}[c]{.47\hsize}
 \begin{align*}
  & \text{$\face{0,1,2}$ and $\face{4,5,6}$}\\ 
  & \text{$\face{0,1,6}$ and $\face{4,1,6}$} \\
  & \text{$\face{1,2,4}$ and $\face{5,2,4}$} \\
  & \text{$\face{2,0,5}$ and $\face{6,0,5}$} 
 \end{align*}
\end{minipage}
\begin{minipage}[c]{.47\hsize}
 \centering
 \begin{tikzpicture}[scale=0.4]
 \clip (-2.4, -1) rectangle (8, 11);
  \tikzset{face/.append style={fill opacity=0.9}}
 \newcommand*{\mylabel}[1]{%
 \ifcase #1 0 \or 1 \or 2 \or  \or 5 \or 6 \or 4 \or  \or  \or  \or  \or \fi}
\definecolor{mycolor0}{RGB}{255,153,153}
\definecolor{mycolor1}{RGB}{255,204,153}
\definecolor{mycolor2}{RGB}{255,255,153}
\definecolor{mycolor3}{RGB}{255,204,153}
\definecolor{mycolor4}{RGB}{255,255,153}
\definecolor{mycolor5}{RGB}{255,153,153}
\definecolor{mycolor6}{RGB}{153,255,153}
\definecolor{mycolor7}{RGB}{153,255,153}
\definecolor{mycolor8}{RGB}{153,255,204}
\definecolor{mycolor9}{RGB}{153,255,204}
\definecolor{mycolor10}{RGB}{153,204,255}
\definecolor{mycolor11}{RGB}{153,204,255}
\definecolor{mycolor12}{RGB}{153,153,255}
\definecolor{mycolor13}{RGB}{153,153,255}
\definecolor{mycolor14}{RGB}{255,153,255}
\definecolor{mycolor15}{RGB}{255,153,255}
\definecolor{mycolor16}{RGB}{255,153,204}
\definecolor{mycolor17}{RGB}{255,153,204}
 \coordinate (0) at (0, 0);
 \coordinate (1) at (3.99, 2.74);
 \coordinate (2) at (-0.88, 5);
 \coordinate (3) at (1.11, 2.7);
 \coordinate (4) at (1.42, 2.38);
 \coordinate (5) at (6.36, 5.49);
 \coordinate (6) at (0, 8.56);
 \coordinate (7) at (2.71, 5.67);
 \coordinate (8) at (0.59, 0.99);
 \coordinate (9) at (5.01, 3.93);
 \coordinate (10) at (-0.5, 6.54);
 \node at (1) [below right] {\mylabel{1}};
 \node at (2) [left] {\mylabel{2}};
 \draw[frame, face, fill=mycolor14] (6)--(1)--(5)--cycle;
 \draw[frame, face, fill=mycolor13] (2)--(1)--(6)--cycle;
 \node at (0) [below] {\mylabel{0}};
 \draw[frame, face, fill=mycolor17] (1)--(0)--(5)--cycle;
 \node at (5) [right] {\mylabel{5}};
 \draw[frame, face, fill=mycolor7] (2)--(4)--(0)--cycle;
 \node at (6) [above] {\mylabel{6}};
 \draw[frame, face, fill=mycolor11] (4)--(2)--(6)--cycle;
 \draw[frame, face, fill=mycolor9] (5)--(0)--(4)--cycle;
 \node at (4) [left] {\mylabel{4}};
 \end{tikzpicture}
\end{minipage}
\end{lem}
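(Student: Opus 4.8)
The plan is to prove that $\sim$ coincides with the equivalence relation $R$ generated by the four displayed triangle identifications, by verifying the two inclusions separately.

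For $R\subseteq{\sim}$, I would realize each identification as (the restriction to the relevant $2$-simplex of) an explicit affine self-map of $\mathbb{R}^3$ and check that it is compatible with $p$. The top--bottom pair $\face{0,1,2}\leftrightarrow\face{4,5,6}$ is realized by $(x,y,z)\mapsto(y,z,x+1)$, which preserves $p$ since $e^{2\pi i(x+1)}=e^{2\pi ix}$. Each of the three side pairs merely duplicates a coordinate: $\face{0,1,6}\leftrightarrow\face{4,1,6}$ is realized on $\{y=z\}$ by $(x,y,z)\mapsto(x,y,x+1)$, the pair $\face{1,2,4}\leftrightarrow\face{5,2,4}$ on $\{z=x+1\}$ by $(x,y,z)\mapsto(y,y,z)$, and $\face{2,0,5}\leftrightarrow\face{6,0,5}$ on $\{x=y\}$ by $(x,y,z)\mapsto(x,z,z)$. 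For each of the four maps one checks on the six named vertices that they go where claimed (a finite computation), and compatibility with $p$ is immediate because on the relevant face one of $e^{2\pi ix},e^{2\pi iy},e^{2\pi iz}$ is already repeated. Since $\sim$ is an equivalence relation containing every generator of $R$, this gives $R\subseteq{\sim}$.

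For ${\sim}\subseteq R$ it is enough to check that, for every $A\in\spotc$, the fibre $p^{-1}(A)\cap P$ is contained in a single $R$-class, and I would argue by cases on $\card{A}$. If $\card{A}=3$, write the three points cyclically as $e^{2\pi ia_1},e^{2\pi ia_2},e^{2\pi ia_3}$ with $a_1<a_2<a_3<a_1+1$; the triples $(x,y,z)$ with $x\le y\le z\le x+1$ mapping to $A$ are exactly $(a_1,a_2,a_3)$, $(a_2,a_3,a_1+1)$, $(a_3,a_1+1,a_2+1)$ and their translates by $(n,n,n)$, and their coordinate sums run through the set $a_1+a_2+a_3+\mathbb{Z}$, so at most two --- generically exactly one --- lie in the slab $0\le x+y+z\le1$; when two do, they lie on the bottom and top faces and are matched by the first identification. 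If $\card{A}=1$ the fibre is one point of $D$, or one of the pairs $\{\mathbf{0},\mathbf{4}\}$, $\{\mathbf{1},\mathbf{5}\}$, $\{\mathbf{2},\mathbf{6}\}$, again matched by the first identification --- this is precisely what gives $D/R\cong S^1$. If $\card{A}=2$, say $A=\{a,b\}$, the fibre lies in $S\setminus D$ and, generically, consists of exactly two triples: one whose multiset of circle-points is $\{a,a,b\}$ and one whose multiset is $\{a,b,b\}$. Classifying these by which quadrilateral face, and which of its two triangles, they sit on, one finds they are always matched by exactly one of the three side identifications --- for instance if the first is $(\alpha,\alpha,\beta)$ lying in $\face{2,0,5}$ then $(x,z,z)$ sends it to $(\alpha,\beta,\beta)$, which is the second. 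The finitely many degenerate sub-cases, in which one of the two triples is not unique, have that triple lying on both the bottom and the top face, and so reduce to the generic situation via the first identification.

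The inclusion $R\subseteq{\sim}$ is a routine finite check; the content is in ${\sim}\subseteq R$, and there the only fiddly point is the $\card{A}=2$ bookkeeping: one must match the two fibre triples to the correct side identification, with the stated vertex correspondence, rather than to some other affine map between the same pair of triangles. Organising that case by ``which coordinate is the repeated one, and where if anywhere the wrap-around $z=x+1$ occurs'' reduces it to a short enumeration --- which is no doubt why the authors call the lemma straightforward.
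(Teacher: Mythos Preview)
Your proposal is correct. The paper itself offers no proof of this lemma---it merely prefaces it with ``It is straightforward to see the following''---so there is nothing to compare your argument against beyond the implicit claim that a direct verification works. Your verification is precisely that: explicit affine maps realising each of the four identifications (all of which check out on the listed vertices and are visibly $p$-compatible), followed by a case analysis on $\card{A}$ showing that every fibre of $p$ on $P$ is a single $R$-class. Your bookkeeping in the $\card{A}=2$ case, organised by which of the three boundary equalities holds and which triangle of the relevant rectangular face the point lies in, is exactly what is needed, and your worked example $(\alpha,\alpha,\beta)\in\face{2,0,5}\mapsto(\alpha,\beta,\beta)\in\face{6,0,5}$ is consistent with the face decompositions (one can check, for instance, that membership in $\face{2,0,5}$ is characterised on the $x=y$ face by $x+2z\le 1$, which is exactly the condition for the image to land in $P$).
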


These identifications 
can be visually seen as follows.

We first identify the sides. 
By a half twist clockwise of  the top of the prism, we get an octahedron of Figure \ref{Foct} 
and expand the top and the bottom of the octahedron to get a parallelepiped of Figure \ref{F4}.  
Further twisting and pushing down the top to 
identify the sides,
we get the triangular bipyramid of Figure \ref{Fbip}.
Figure \ref{5} shows $S/{\sim}$ in the triangular bipyramid. 
We see that 
$S/{\sim}$ is a full and a half twisted M\"{o}bius band and $D/{\sim}$ is its boundary.
In particular, ${D/{\sim}}$ is a trefoil knot in the triangular bipyramid.

Now, $P/{\sim}$ is obtained by identifying the surface of 
the upper and the lower pyramids of the triangular bipyramid,  
we see that $P/{\sim}$ is homeomorphic to $S^3$.

You may find a JavaScript animation of these identifications at \cite{STU}.

In the next section, we give a more explicit homeomorphism 
from $P/{\sim}$ to $S^3$ as a simplicial map.


\begin{figure}[ht]
\newcommand*{\mylabel}[1]{%
\ifcase #1 0 \or 1 \or 2 \or  \or 5 \or 6 \or 4 \or  \or  \or  \or  \or \fi}
\definecolor{mycolor0}{RGB}{255,153,153}
\definecolor{mycolor1}{RGB}{255,204,153}
\definecolor{mycolor2}{RGB}{255,255,153}
\definecolor{mycolor3}{RGB}{255,204,153}
\definecolor{mycolor4}{RGB}{255,255,153}
\definecolor{mycolor5}{RGB}{255,153,153}
\definecolor{mycolor6}{RGB}{153,255,153}
\definecolor{mycolor7}{RGB}{153,255,153}
\definecolor{mycolor8}{RGB}{153,255,204}
\definecolor{mycolor9}{RGB}{153,255,204}
\definecolor{mycolor10}{RGB}{153,204,255}
\definecolor{mycolor11}{RGB}{153,204,255}
\definecolor{mycolor12}{RGB}{153,153,255}
\definecolor{mycolor13}{RGB}{153,153,255}
\definecolor{mycolor14}{RGB}{255,153,255}
\definecolor{mycolor15}{RGB}{255,153,255}
\definecolor{mycolor16}{RGB}{255,153,204}
\definecolor{mycolor17}{RGB}{255,153,204}
\begin{tabular}{cc}
\begin{minipage}[b]{0.47\hsize}
\centering
\begin{tikzpicture}[scale=1.7]
\coordinate (0) at (0, 0);
\coordinate (1) at (0.86, 0.51);
\coordinate (2) at (-0.59, 0.81);
\coordinate (3) at (0.1, 0.47);
\coordinate (4) at (0.25, 1.21);
\coordinate (5) at (1.15, 1.53);
\coordinate (6) at (-0.45, 1.86);
\coordinate (7) at (0.32, 1.55);
\coordinate (8) at (0.12, 0.57);
\coordinate (9) at (1, 1);
\coordinate (10) at (-0.52, 1.31);
\newcommand*{\myentry}[1]{%
\ifcase #1 
\draw[frame, face, fill=mycolor0] (0)--(1)--(3)--cycle \or 
\draw[frame, face, fill=mycolor1] (1)--(2)--(3)--cycle \or 
\draw[frame, face, fill=mycolor2] (2)--(0)--(3)--cycle \or 
\draw[frame, face, fill=mycolor3] (4)--(5)--(7)--cycle \or 
\draw[frame, face, fill=mycolor4] (5)--(6)--(7)--cycle \or 
\draw[frame, face, fill=mycolor5] (6)--(4)--(7)--cycle \or 
\draw[frame, face, fill=mycolor6] (2)--(8)--(0)--cycle \or 
\draw[frame, face, fill=mycolor7] (2)--(4)--(8)--cycle \or 
\draw[frame, face, fill=mycolor8] (5)--(0)--(8)--cycle \or 
\draw[frame, face, fill=mycolor9] (5)--(8)--(4)--cycle \or 
\draw[frame, face, fill=mycolor10] (4)--(2)--(10)--cycle \or 
\draw[frame, face, fill=mycolor11] (4)--(10)--(6)--cycle \or 
\draw[frame, face, fill=mycolor12] (2)--(10)--(1)--cycle \or 
\draw[frame, face, fill=mycolor13] (10)--(1)--(6)--cycle \or 
\draw[frame, face, fill=mycolor14] (6)--(1)--(9)--cycle \or 
\draw[frame, face, fill=mycolor15] (6)--(9)--(5)--cycle \or 
\draw[frame, face, fill=mycolor16] (1)--(9)--(0)--cycle \or 
\draw[frame, face, fill=mycolor17] (9)--(0)--(5)--cycle \or 
\draw[thickedge] (1) -- (9) \or 
\draw[thickedge] (5) -- (9) \or 
\draw[thickedge] (2) -- (10) \or 
\draw[thickedge] (6) -- (10) \or 
\draw[thickedge] (0) -- (8) \or 
\draw[thickedge] (4) -- (8) \or 
\node at (0) [below] {\mylabel{0}} \or 
\node at (1) [right] {\mylabel{1}} \or 
\node at (2) [left] {\mylabel{2}} \or 
\node at (3) {\mylabel{3}} \or 
\node at (4) [below right] {\mylabel{4}} \or 
\node at (5) [right] {\mylabel{5}} \or 
\node at (6) [left] {\mylabel{6}} \or 
\node at (7) {\mylabel{7}} \or 
\node at (8) {\mylabel{8}} \or 
\node at (9) {\mylabel{9}} \or 
\node at (10) {\mylabel{10}} \or 
\fi}
\foreach \x in {
27  
,12  
,13  
,14  
,15  
,1  
,2  
,0  
,17  
,16  
,4  
,5  
,3  
,10  
,11  
,6  
,7  
,8  
,9  
,21  
,20  
,19  
,18  
,22  
,23  
,34  
,33  
,25  
,26  
,30  
,29  
,24  
,31  
,32  
,28  
}
{
\myentry{\x};
}
\draw[line width=3pt, rotate=-9, white] (7)+(-0.1,0.2) arc (-230:50:.2 and .1);
\draw[<-, thick, rotate=-9] (7)+(-0.1,0.2) arc (-230:50:.2 and .1);
\end{tikzpicture}
\caption{Prism}
\end{minipage}
& 
\begin{minipage}[b]{0.47\hsize}
\centering
\begin{tikzpicture}[scale=1.7]
\coordinate (0) at (0, 0);
\coordinate (1) at (0.86, 0.51);
\coordinate (2) at (-0.59, 0.81);
\coordinate (3) at (0.1, 0.47);
\coordinate (4) at (0.37, 1.81);
\coordinate (5) at (-0.64, 1.58);
\coordinate (6) at (1.21, 1.2);
\coordinate (7) at (0.32, 1.55);
\coordinate (8) at (-0.61, 1.15);
\coordinate (9) at (0.59, 0.59);
\coordinate (10) at (0.62, 1.15);
\newcommand*{\myentry}[1]{%
\ifcase #1 
\draw[frame, face, fill=mycolor0] (0)--(1)--(3)--cycle \or 
\draw[frame, face, fill=mycolor1] (1)--(2)--(3)--cycle \or 
\draw[frame, face, fill=mycolor2] (2)--(0)--(3)--cycle \or 
\draw[frame, face, fill=mycolor3] (4)--(5)--(7)--cycle \or 
\draw[frame, face, fill=mycolor4] (5)--(6)--(7)--cycle \or 
\draw[frame, face, fill=mycolor5] (6)--(4)--(7)--cycle \or 
\draw[frame, face, fill=mycolor6] (2)--(8)--(0)--cycle \or 
\draw[frame, face, fill=mycolor7] (2)--(4)--(8)--cycle \or 
\draw[frame, face, fill=mycolor8] (5)--(0)--(8)--cycle \or 
\draw[frame, face, fill=mycolor9] (5)--(8)--(4)--cycle \or 
\draw[frame, face, fill=mycolor10] (4)--(2)--(10)--cycle \or 
\draw[frame, face, fill=mycolor11] (4)--(10)--(6)--cycle \or 
\draw[frame, face, fill=mycolor12] (2)--(10)--(1)--cycle \or 
\draw[frame, face, fill=mycolor13] (10)--(1)--(6)--cycle \or 
\draw[frame, face, fill=mycolor14] (6)--(1)--(9)--cycle \or 
\draw[frame, face, fill=mycolor15] (6)--(9)--(5)--cycle \or 
\draw[frame, face, fill=mycolor16] (1)--(9)--(0)--cycle \or 
\draw[frame, face, fill=mycolor17] (9)--(0)--(5)--cycle \or 
\draw[thickedge] (1) -- (9) \or 
\draw[thickedge] (5) -- (9) \or 
\draw[thickedge] (2) -- (10) \or 
\draw[thickedge] (6) -- (10) \or 
\draw[thickedge] (0) -- (8) \or 
\draw[thickedge] (4) -- (8) \or 
\node at (0) [below] {\mylabel{0}} \or 
\node at (1) [right] {\mylabel{1}} \or 
\node at (2) [left] {\mylabel{2}} \or 
\node at (3) {\mylabel{3}} \or 
\node at (4) [above] {\mylabel{4}} \or 
\node at (5) [left] {\mylabel{5}} \or 
\node at (6) [right] {\mylabel{6}} \or 
\node at (7) {\mylabel{7}} \or 
\node at (8) {\mylabel{8}} \or 
\node at (9) {\mylabel{9}} \or 
\node at (10) {\mylabel{10}} \or 
\fi}
\foreach \x in {
27  
,26  
,25  
,20  
,21  
,12  
,10  
,34  
,28  
,1  
,2  
,0  
,13  
,23  
,11  
,7  
,9  
,3  
,5  
,4  
,6  
,8  
,16  
,14  
,17  
,15  
,22  
,18  
,19  
,32  
,24  
,33  
,31  
,29  
,30  
}
{
\myentry{\x};
}
\end{tikzpicture}
\caption{Octahedron}
\label{Foct}
 \end{minipage}
\\
\begin{minipage}[b]{0.47\hsize}
\centering
\begin{tikzpicture}[scale=1.7]
\coordinate (0) at (0, 0);
\coordinate (1) at (0.86, 0.51);
\coordinate (2) at (-0.59, 0.81);
\coordinate (3) at (-0.08, -0.41);
\coordinate (4) at (0.37, 1.81);
\coordinate (5) at (-0.64, 1.58);
\coordinate (6) at (1.21, 1.2);
\coordinate (7) at (0.6, 2.91);
\coordinate (8) at (-0.61, 1.15);
\coordinate (9) at (0.59, 0.59);
\coordinate (10) at (0.62, 1.15);
\newcommand*{\myentry}[1]{%
\ifcase #1 
\draw[frame, face, fill=mycolor0] (0)--(1)--(3)--cycle \or 
\draw[frame, face, fill=mycolor1] (1)--(2)--(3)--cycle \or 
\draw[frame, face, fill=mycolor2] (2)--(0)--(3)--cycle \or 
\draw[frame, face, fill=mycolor3] (4)--(5)--(7)--cycle \or 
\draw[frame, face, fill=mycolor4] (5)--(6)--(7)--cycle \or 
\draw[frame, face, fill=mycolor5] (6)--(4)--(7)--cycle \or 
\draw[frame, face, fill=mycolor6] (2)--(8)--(0)--cycle \or 
\draw[frame, face, fill=mycolor7] (2)--(4)--(8)--cycle \or 
\draw[frame, face, fill=mycolor8] (5)--(0)--(8)--cycle \or 
\draw[frame, face, fill=mycolor9] (5)--(8)--(4)--cycle \or 
\draw[frame, face, fill=mycolor10] (4)--(2)--(10)--cycle \or 
\draw[frame, face, fill=mycolor11] (4)--(10)--(6)--cycle \or 
\draw[frame, face, fill=mycolor12] (2)--(10)--(1)--cycle \or 
\draw[frame, face, fill=mycolor13] (10)--(1)--(6)--cycle \or 
\draw[frame, face, fill=mycolor14] (6)--(1)--(9)--cycle \or 
\draw[frame, face, fill=mycolor15] (6)--(9)--(5)--cycle \or 
\draw[frame, face, fill=mycolor16] (1)--(9)--(0)--cycle \or 
\draw[frame, face, fill=mycolor17] (9)--(0)--(5)--cycle \or 
\draw[thickedge] (1) -- (9) \or 
\draw[thickedge] (5) -- (9) \or 
\draw[thickedge] (2) -- (10) \or 
\draw[thickedge] (6) -- (10) \or 
\draw[thickedge] (0) -- (8) \or 
\draw[thickedge] (4) -- (8) \or 
\node at (0) [below] {\mylabel{0}} \or 
\node at (1) [right] {\mylabel{1}} \or 
\node at (2) [left] {\mylabel{2}} \or 
\node at (3) [below] {\mylabel{3}} \or 
\node at (4) [above] {\mylabel{4}} \or 
\node at (5) [left] {\mylabel{5}} \or 
\node at (6) [right] {\mylabel{6}} \or 
\node at (7) [above] {\mylabel{7}} \or 
\node at (8) [left] {\mylabel{8}} \or 
\node at (9) [below] {\mylabel{9}} \or 
\node at (10) [below] {\mylabel{10}} \or 
\fi}
\foreach \x in {
27  
,26  
,25  
,20  
,21  
,12  
,10  
,34  
,28  
,1  
,2  
,0  
,13  
,23  
,11  
,7  
,9  
,3  
,5  
,4  
,6  
,8  
,16  
,14  
,17  
,15  
,22  
,18  
,19  
,32  
,24  
,33  
,31  
,29  
,30  
}
{
\myentry{\x};
}
\end{tikzpicture}
\caption{Parallelepiped}
\label{F4}
\end{minipage}
&
\begin{minipage}[b]{0.47\hsize}
\centering
\begin{tikzpicture}[scale=0.84]
\coordinate (0) at (0, 0);
\coordinate (1) at (3.16, 0.53);
\coordinate (2) at (-1.18, 0.68);
\coordinate (3) at (0.62, -2.8);
\coordinate (4) at (3.16, 0.53);
\coordinate (5) at (-1.18, 0.68);
\coordinate (6) at (0, 0);
\coordinate (7) at (0.7, 3.96);
\coordinate (8) at (0.43, 1.65);
\coordinate (9) at (0.12, 1.58);
\coordinate (10) at (1.48, 1.6);
\draw[frame, face, fill=mycolor3] (4)--(5)--(7)--cycle;
\draw[frame, face, fill=mycolor1] (1)--(2)--(3)--cycle;
\node at (2) [below left] {\mylabel{2}};
\node at (5) [above left] {\mylabel{5}};
\draw[frame, face, fill=mycolor9] (5)--(8)--(4)--cycle;
\draw[thickedge] (4) -- (8);
\node at (1) [below right] {\mylabel{1}};
\node at (4) [above right] {\mylabel{4}};
\draw[frame, face, fill=mycolor10] (4)--(2)--(10)--cycle;
\draw[thickedge] (2) -- (10);
\draw[frame, face, fill=mycolor8] (5)--(0)--(8)--cycle;
\draw[thickedge] (0) -- (8);
\draw[frame, face, fill=mycolor15] (6)--(9)--(5)--cycle;
\draw[frame, face, fill=mycolor14] (6)--(1)--(9)--cycle;
\draw[thickedge] (5) -- (9);
\draw[thickedge] (1) -- (9);
\draw[frame, face, fill=mycolor11] (4)--(10)--(6)--cycle;
\draw[thickedge] (6) -- (10);
\draw[frame, face, fill=mycolor2] (2)--(0)--(3)--cycle;
\draw[frame, face, fill=mycolor0] (0)--(1)--(3)--cycle;
 \draw[frame, face, fill=mycolor4] (5)--(6)--(7)--cycle;
 \draw[frame, face, fill=mycolor5] (6)--(4)--(7)--cycle;
\node at (3) {\mylabel{3}};
\node at (7) {\mylabel{7}};
\node at (0) [below left] {\mylabel{0}};
\node at (6) [above left] {\mylabel{6}};
\end{tikzpicture}
\caption{Triangular bipyramid}
\label{Fbip}
\end{minipage}
\end{tabular}
\end{figure}

\begin{figure}[ht]
\centering
\begin{tikzpicture}[scale=0.84]
\newcommand*{\mylabel}[1]{%
\ifcase #1 0 \or 1 \or 2 \or  \or 5 \or 6 \or 4 \or  \or  \or  \or  \or \fi}
\definecolor{mycolor0}{RGB}{255, 126, 126}
\definecolor{mycolor1}{RGB}{255, 200, 126}
\definecolor{mycolor2}{RGB}{254, 255, 126}
\definecolor{mycolor3}{RGB}{255, 200, 126}
\definecolor{mycolor4}{RGB}{254, 255, 126}
\definecolor{mycolor5}{RGB}{255, 126, 126}
\definecolor{mycolor6}{RGB}{126, 255, 126}
\definecolor{mycolor7}{RGB}{126, 255, 126}
\definecolor{mycolor8}{RGB}{126, 255, 200}
\definecolor{mycolor9}{RGB}{126, 255, 200}
\definecolor{mycolor10}{RGB}{126, 200, 255}
\definecolor{mycolor11}{RGB}{126, 200, 255}
\definecolor{mycolor12}{RGB}{126, 126, 255}
\definecolor{mycolor13}{RGB}{126, 126, 255}
\definecolor{mycolor14}{RGB}{255, 126, 254}
\definecolor{mycolor15}{RGB}{255, 126, 254}
\definecolor{mycolor16}{RGB}{255, 126, 200}
\definecolor{mycolor17}{RGB}{255, 126, 200}
\coordinate (0) at (0, 0);
\coordinate (1) at (-4.03, 0.05);
\coordinate (2) at (-1.89, -3.49);
\coordinate (3) at (-1.79, -0.98);
\coordinate (4) at (-4.03, 0.05);
\coordinate (5) at (-1.89, -3.49);
\coordinate (6) at (0, 0);
\coordinate (7) at (-2.03, -1.39);
\coordinate (8) at (-2.32, -1.87);
\coordinate (9) at (-1.11, -1.13);
\coordinate (10) at (-2.29, -0.49);
\node at (0) [above] {\mylabel{0}};
\node at (6) [below right] {\mylabel{6}};

\begin{scope}
 \clip (6)--(5)--(3)--cycle;
 \draw[frame, face, fill=mycolor6] (2)--(8)--(0)--cycle;
 \draw[frame, face, fill=mycolor8] (5)--(0)--(8)--cycle;
 \draw[thickedge] (0) -- (8);
\end{scope}

\draw[frame, face, fill=mycolor16] (1)--(9)--(0)--cycle;
\draw[frame, face, fill=mycolor14] (6)--(1)--(9)--cycle;
\draw[thickedge] (1) -- (9);
\node at (2) [below right]{\mylabel{2}};
\node at (5) [below left] {\mylabel{5}};
\node at (1) [below left] {\mylabel{1}};
\node at (4) [above]{\mylabel{4}};
\draw[frame, face, fill=mycolor12] (2)--(10)--(1)--cycle;
\draw[frame, face, fill=mycolor10] (4)--(2)--(10)--cycle;
\draw[thickedge] (2) -- (10);

\begin{scope}
 \clip (5)--(3)--(1)--cycle;
 \draw[frame, face, fill=mycolor6] (2)--(8)--(0)--cycle;
 \draw[frame, face, fill=mycolor8] (5)--(0)--(8)--cycle;
 \draw[thickedge] (0) -- (8);
\end{scope}

\draw[frame, face, fill=mycolor7] (2)--(4)--(8)--cycle;
\draw[frame, face, fill=mycolor9] (5)--(8)--(4)--cycle;
\draw[frame, face, fill=mycolor13] (10)--(1)--(6)--cycle;
\draw[frame, face, fill=mycolor11] (4)--(10)--(6)--cycle;
\draw[frame, face, fill=mycolor17] (9)--(0)--(5)--cycle;
\draw[frame, face, fill=mycolor15] (6)--(9)--(5)--cycle;
\draw[thickedge] (4) -- (8);
\draw[thickedge] (6) -- (10);
\draw[thickedge] (5) -- (9);
\node at (10) {\mylabel{10}};
\node at (9) {\mylabel{9}};
\node at (8) {\mylabel{8}};

\end{tikzpicture}
\caption{$S/{\sim}$}
\label{5}

\end{figure}

\section{The Barnette sphere}
Note that, by adding a vertex in the interior of the upper tetrahedron of the 
triangular bipyramid of Figure \ref{Fbip}, 
we obtain a simplicial decomposition of the bipyramid as in Figure \ref{barnette-sphere} 
and by identifying the vertices $3$ and $7$ (more precisely, by identifying 
faces $\face{0,1,3}$ and $\face{0,1,7}$, $\face{1,2,3}$ and $\face{1,2,7}$, $\face{2,0,3}$ and $\face{2,0,7}$), 
we obtain a simplicial decomposition of the $3$-sphere 
which is known as the Barnette sphere \cite{barnette}. 
We denote the Barnette sphere by $S_B^3$. 

We give a simplicial decomposition of Morton's prism $P$ as follows.
Add the barycenters of the bottom and top faces,
the mid points of vertical edges and a point in the interior of $P$ as 
vertices. 
As in the previous section, we slightly twist the top of the prism and 
stretch the bottom to get a 
polyhedron in Figure \ref{trimorton}.
We triangulate it as in the figure.
We denote this simplicial complex by the same symbol $P$. 

The vertices and facets of $P$ and $S_B^3$ are listed in the following table.
\begin{table}[ht]
 \renewcommand*{\arraystretch}{1.2}
\centering
 \begin{tabular}{c|l|l}
 & $P$ & $S_B^3$\\ \hline 
 vertices & $\{0,1,2,3,4,5,6,7,8,9,10,11\}$ & $\{0,1,2,3,8,9,10,11\}$ \\ \hline 
 facets 
 & $\face{4,5,7,9}$, $\face{5,6,7,10}$, $\face{4,6,7,8}$,		     & $\face{0,1,3,9}$, $\face{1,2,3,10}$, $\face{0,2,3,8}$, 	\\ 
 & $\face{4,7,8,9}$, $\face{5,7,9,10}$, $\face{6,7,8,10}$,		     & $\face{0,3,8,9}$, $\face{1,3,9,10}$, $\face{2,3,8,10}$, 	\\ 
 & $\face{4,1,8,9}$, $\face{5,2,9,10}$, $\face{0,6,8,10}$,		     & $\face{0,1,8,9}$, $\face{1,2,9,10}$, $\face{0,2,8,10}$, 	\\ 
 & $\face{0,8,10,11}$, $\face{1,8,9,11}$, $\face{2,9,10,11}$,     & $\face{0,8,10,11}$, $\face{1,8,9,11}$, $\face{2,9,10,11}$,\\ 
 & $\face{0,1,8,11}$, $\face{1, 2,9,11}$, $\face{0,2,10,11}$,	     & $\face{0,1,8,11}$, $\face{1,2,9,11}$, $\face{0,2,10,11}$,	\\ 
& $\face{7,8,9,10}$, $\face{8,9,10,11}$, $\face{0,1,2,11}$,                & $\face{3,8,9,10}$, $\face{8,9,10,11}$, $\face{0,1,2,11}$, \\ 
 & $\face{0,1,2,3}$					     & $\face{0,1,2,3}$				   
 \end{tabular}
\end{table}

Clearly, this simplicial decomposition of $P$ induces a 
simplicial decomposition of $P/{\sim}$. 

We define a map 
\[
  q\colon \{0,1,2,3,4,5,6,7,8,9,10,11\} \to \{0,1,2,3,8,9,10,11\}
\]
by 
\[
 q(i) = \begin{cases}
	 i-4, & 4\leq i \leq 7 \\
	 i, & \text{otherwise}.
	\end{cases}
\]
The map $q$ defines a simplicial map $q\colon P \to S_B^3$, and
it induces an isomorphism of simplicial complexes:
\[
 \bar{q}\colon {P/{\sim}} \xrightarrow{\cong} S_B^3
\]
Moreover, 
under this simplicial decomposition, 
$S/{\sim}$ is the 
$2$-dimensional subcomplex in Figure \ref{barnette_mobius} 
which is a full and a half twisted M\"{o}bius band, 
and $D/{\sim}$ is its boundary.
\begin{table}[ht]
 \renewcommand*{\arraystretch}{1.2}
 \centering
 \begin{tabular}{c|l|l}
  & $S/{\sim}$ & $D/{\sim}$ \\ \hline
  vertices & $\{0,1,2,8,9,10\}$ & $\{0,1,2,8,9,10\}$ \\ \hline
  facets & $\face{0,1,8}$, $\face{0,2,8}$, & $\face{1,8}$, $\face{2,8} $, \\ 
  & $\face{0,1,9}$, $\face{1,2,9}$, & $\face{0,9}$, $\face{2,9} $, \\ 
  & $\face{0,2,10}$, $\face{1,2,10}$ & $\face{0,10}$, $\face{1,10} $ 
 \end{tabular}
\end{table}

Thus, we obtained the following:
\begin{thm}
 $\spotc$ is homeomorphic to $S^3$. 

 $\spotc[2]$ is homeomorphic to a M\"{o}bius band, and 
 $\spotc[1]$ is its boundary. 
 $\spotc[2]$ is included in $\spotc \cong S^3$ as a 
 full and a half twisted M\"{o}bius band.

 In particular, the inclusion $S^1=\spotc[1]\subset \spotc \cong S^3$ is a trefoil knot.
\end{thm}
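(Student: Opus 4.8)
The plan is to assemble the pieces established above. By the proposition identifying $\bar p$ as a homeomorphism, there is a homeomorphism $\spotc\cong P/{\sim}$ carrying $\spotc[2]$ onto $S/{\sim}$ and $\spotc[1]$ onto $D/{\sim}$ compatibly with the inclusions; composing it with the simplicial isomorphism $\bar q\colon P/{\sim}\xrightarrow{\cong}S_B^3$ gives $\spotc\cong|S_B^3|$. Since $S_B^3$ is the Barnette sphere \cite{barnette} --- equivalently, by the construction recalled at the start of this section, $|S_B^3|$ is the bipyramidal $3$-ball with the two triangulated halves of its boundary sphere affinely identified, a space one checks directly to be $S^3$ --- we obtain $\spotc\cong S^3$, which reproves Theorem~\ref{main1}. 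Under the composite homeomorphism, $\spotc[2]$ and $\spotc[1]$ become the subcomplexes of $S_B^3$ recorded in the second table.

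Next I would read off the structure of those subcomplexes directly from the table. The six triangles of $S/{\sim}$ span the vertices $\{0,1,2,8,9,10\}$; of their twelve edges, exactly six lie in two of the triangles and exactly six --- namely $\face{1,8},\face{2,8},\face{0,9},\face{2,9},\face{0,10},\face{1,10}$ --- lie in only one. Hence $S/{\sim}$ is a connected compact combinatorial surface with boundary of Euler characteristic $6-12+6=0$ whose boundary is a single hexagonal circle (visiting $8,1,10,0,9,2$ cyclically), which is precisely $D/{\sim}$. A connected compact surface with boundary of Euler characteristic $0$ and exactly one boundary circle is a M\"obius band (non-orientability can also be confirmed by transporting a local frame around the core triangle by triangle). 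Thus $\spotc[2]$ is a M\"obius band with boundary $\spotc[1]$.

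The essential point --- and the main obstacle --- is to determine how this M\"obius band is embedded in $\spotc\cong S^3$, that is, that it is \emph{full and a half twisted}, equivalently that $\spotc[1]=D/{\sim}$ is a trefoil. Morally this is already visible in Figures~\ref{Fbip} and~\ref{5} (and Figure~\ref{barnette_mobius}): inside the triangular bipyramid, $S/{\sim}$ is the band shown, carrying three half-twists. To make it rigorous I would pass to a barycentric subdivision and let $N$ be a regular neighborhood of $S/{\sim}$ in $S_B^3$; a finite inspection of the facet list should show that $N$ is a solid torus whose core is the core circle of the M\"obius band and that the complement $S_B^3\setminus\operatorname{int}N$ is \emph{also} a solid torus, so that $S_B^3$ carries a genus-one Heegaard splitting --- reconfirming $\spotc\cong S^3$ and, crucially, forcing the core of $N$ to be unknotted. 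On the torus $\partial N$ the curve $\spotc[1]=\partial(S/{\sim})$ runs twice in the longitudinal direction, being the boundary of a M\"obius band, and its meridional winding is the framing the band induces on the core; checking --- from the figures, or by a linking computation in $S^3$ --- that this framing equals $\pm3$ (one full plus one half twist) identifies $\spotc[1]$ with the $(2,3)$-torus curve on an unknotted torus, i.e.\ with the trefoil, and reproves Theorem~\ref{main2}. The only genuinely nontrivial work is thus concentrated here: verifying that $N$ and its complement are solid tori, and counting the half-twists; the rest is bookkeeping or quotation of what was proved earlier.
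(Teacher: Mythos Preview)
Your proposal is correct and, for the first two parts, matches the paper: one assembles $\bar p$ and $\bar q$ to get $\spotc\cong P/{\sim}\cong S_B^3\cong S^3$, and reads off from the facet table that $S/{\sim}$ is a M\"obius band with boundary $D/{\sim}$. (Your Euler-characteristic and orientability check is a nice explicit substitute for what the paper leaves to Figure~\ref{barnette_mobius}.)

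Where you diverge is in the last step, the identification of the embedding as a thrice half-twisted band and hence of the boundary as a trefoil. The paper does \emph{not} argue via regular neighborhoods, Heegaard splittings, or torus-knot framings. Instead it exploits the intermediate stage of the construction: after identifying the side faces of the prism but \emph{before} identifying the top and bottom of the resulting triangular bipyramid (Figure~\ref{Fbip}), one is still sitting in ordinary $3$-space, and Figure~\ref{5} simply displays $S/{\sim}$ there as a band with three visible half-twists; its boundary $D/{\sim}$ is therefore already a trefoil in the bipyramidal ball, and this persists after the final gluing to $S^3$. Your approach would also work and is arguably more rigorous, but it trades the paper's advertised ``elementary (not cut just paste)'' visual argument for nontrivial $3$-manifold topology (verifying that $S^3\setminus\operatorname{int}N$ is a solid torus and computing the framing). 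The paper's shortcut is precisely to do the twist-counting in $\mathbb{R}^3$ before the last identification, where it is immediate.
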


\begin{figure}[ht]
\tikzset{face/.append style={fill opacity=0.5}}
\begin{minipage}[b]{.5\hsize}
 \centering
 \begin{tikzpicture}[scale=1.7]
 \newcommand*{\mylabel}[1]{%
 \ifcase #1 0 \or 1 \or 2 \or 3 \or 5(=1) \or 6(=2) \or 4(=0) \or 7(=3) \or 10 \or 8 \or 9 \or 11 \or \fi}
 \definecolor{mycolor0}{RGB}{255,153,153}
 \definecolor{mycolor1}{RGB}{255,168,153}
 \definecolor{mycolor2}{RGB}{255,183,153}
 \definecolor{mycolor3}{RGB}{255,168,153}
 \definecolor{mycolor4}{RGB}{255,183,153}
 \definecolor{mycolor5}{RGB}{255,153,153}
 \definecolor{mycolor6}{RGB}{255,214,153}
 \definecolor{mycolor8}{RGB}{255,214,153}
 \definecolor{mycolor7}{RGB}{255,229,153}
 \definecolor{mycolor9}{RGB}{255,229,153}
 \definecolor{mycolor10}{RGB}{250,255,153}
 \definecolor{mycolor12}{RGB}{250,255,153}
 \definecolor{mycolor11}{RGB}{235,255,153}
 \definecolor{mycolor13}{RGB}{235,255,153}
 \definecolor{mycolor14}{RGB}{204,255,153}
 \definecolor{mycolor16}{RGB}{204,255,153}
 \definecolor{mycolor15}{RGB}{189,255,153}
 \definecolor{mycolor17}{RGB}{189,255,153}
 \definecolor{mycolor18}{RGB}{173,255,153}
 \definecolor{mycolor19}{RGB}{158,255,153}
 \definecolor{mycolor20}{RGB}{153,255,163}
 \definecolor{mycolor21}{RGB}{153,255,178}
 \definecolor{mycolor22}{RGB}{153,255,193}
 \definecolor{mycolor23}{RGB}{153,255,209}
 \definecolor{mycolor24}{RGB}{153,255,224}
 \definecolor{mycolor25}{RGB}{153,255,239}
 \definecolor{mycolor26}{RGB}{153,255,255}
 \definecolor{mycolor27}{RGB}{153,239,255}
 \definecolor{mycolor28}{RGB}{153,224,255}
 \definecolor{mycolor29}{RGB}{153,209,255}
 \definecolor{mycolor30}{RGB}{153,224,255}
 \definecolor{mycolor31}{RGB}{153,209,255}
 \definecolor{mycolor32}{RGB}{153,193,255}
 \definecolor{mycolor33}{RGB}{153,178,255}
 \definecolor{mycolor34}{RGB}{153,163,255}
 \definecolor{mycolor35}{RGB}{158,153,255}
 \definecolor{mycolor36}{RGB}{173,153,255}
 \definecolor{mycolor37}{RGB}{189,153,255}
 \definecolor{mycolor38}{RGB}{204,153,255}
 \definecolor{mycolor39}{RGB}{219,153,255}
 \definecolor{mycolor40}{RGB}{234,153,255}
 \definecolor{mycolor41}{RGB}{250,153,255}
 \definecolor{mycolor42}{RGB}{255,153,245}
 \definecolor{mycolor43}{RGB}{255,153,229}
 \definecolor{mycolor44}{RGB}{255,153,214}
 \definecolor{mycolor45}{RGB}{255,153,199}
 \definecolor{mycolor46}{RGB}{255,153,183}
 \coordinate (0) at (0, 0);
 \coordinate (1) at (1.8, 0.91);
 \coordinate (2) at (-0.33, 1.24);
 \coordinate (3) at (0.4, -0.45);
 \coordinate (4) at (-0.06, 2.38);
 \coordinate (5) at (2.28, 2.78);
 \coordinate (6) at (-0.35, 2.93);
 \coordinate (7) at (0.63, 2.75);
 \coordinate (8) at (-0.03, 1.01);
 \coordinate (9) at (2.01, 1.73);
 \coordinate (10) at (-0.34, 2);
 \coordinate (11) at (0.52, 1.05);
 \newcommand*{\myentry}[1]{%
 \ifcase #1 
 \draw[frame, face, fill=mycolor0] (0)--(1)--(3)--cycle \or 
 \draw[frame, face, fill=mycolor1] (1)--(2)--(3)--cycle \or 
 \draw[frame, face, fill=mycolor2] (2)--(0)--(3)--cycle \or 
 \draw[frame, face, fill=mycolor3] (4)--(5)--(7)--cycle \or 
 \draw[frame, face, fill=mycolor4] (5)--(6)--(7)--cycle \or 
 \draw[frame, face, fill=mycolor5] (6)--(4)--(7)--cycle \or 
 \draw[frame, face, fill=mycolor6] (2)--(8)--(0)--cycle \or 
 \draw[frame, face, fill=mycolor7] (2)--(4)--(8)--cycle \or 
 \draw[frame, face, fill=mycolor8] (5)--(0)--(8)--cycle \or 
 \draw[frame, face, fill=mycolor9] (5)--(8)--(4)--cycle \or 
 \draw[frame, face, fill=mycolor10] (4)--(2)--(10)--cycle \or 
 \draw[frame, face, fill=mycolor11] (4)--(10)--(6)--cycle \or 
 \draw[frame, face, fill=mycolor12] (2)--(10)--(1)--cycle \or 
 \draw[frame, face, fill=mycolor13] (10)--(1)--(6)--cycle \or 
 \draw[frame, face, fill=mycolor14] (6)--(1)--(9)--cycle \or 
 \draw[frame, face, fill=mycolor15] (6)--(9)--(5)--cycle \or 
 \draw[frame, face, fill=mycolor16] (1)--(9)--(0)--cycle \or 
 \draw[frame, face, fill=mycolor17] (9)--(0)--(5)--cycle \or 
 \draw[frame, face, fill=mycolor18] (8)--(9)--(10)--cycle \or 
 \draw[frame, face, fill=mycolor19] (7)--(8)--(9)--cycle \or 
 \draw[frame, face, fill=mycolor20] (7)--(9)--(10)--cycle \or 
 \draw[frame, face, fill=mycolor21] (7)--(10)--(8)--cycle \or 
 \draw[frame, face, fill=mycolor22] (5)--(7)--(9)--cycle \or 
 \draw[frame, face, fill=mycolor23] (6)--(7)--(9)--cycle \or 
 \draw[frame, face, fill=mycolor24] (4)--(7)--(10)--cycle \or 
 \draw[frame, face, fill=mycolor25] (6)--(7)--(10)--cycle \or 
 \draw[frame, face, fill=mycolor26] (4)--(7)--(8)--cycle \or 
 \draw[frame, face, fill=mycolor27] (5)--(7)--(8)--cycle \or 
 \draw[frame, face, fill=mycolor28] (4)--(10)--(8)--cycle \or 
 \draw[frame, face, fill=mycolor29] (5)--(8)--(9)--cycle \or 
 \draw[frame, face, fill=mycolor30] (6)--(9)--(10)--cycle \or 
 \draw[frame, face, fill=mycolor31] (0)--(8)--(9)--cycle \or 
 \draw[frame, face, fill=mycolor32] (1)--(9)--(10)--cycle \or 
 \draw[frame, face, fill=mycolor33] (2)--(10)--(8)--cycle \or 
 \draw[frame, face, fill=mycolor34] (8)--(9)--(11)--cycle \or 
 \draw[frame, face, fill=mycolor35] (9)--(10)--(11)--cycle \or 
 \draw[frame, face, fill=mycolor36] (10)--(8)--(11)--cycle \or 
 \draw[frame, face, fill=mycolor37] (11)--(0)--(9)--cycle \or 
 \draw[frame, face, fill=mycolor38] (11)--(1)--(9)--cycle \or 
 \draw[frame, face, fill=mycolor39] (11)--(1)--(10)--cycle \or 
 \draw[frame, face, fill=mycolor40] (11)--(2)--(10)--cycle \or 
 \draw[frame, face, fill=mycolor41] (11)--(2)--(8)--cycle \or 
 \draw[frame, face, fill=mycolor42] (11)--(0)--(8)--cycle \or 
 \draw[frame, face, fill=mycolor43] (11)--(0)--(1)--cycle \or 
 \draw[frame, face, fill=mycolor44] (11)--(1)--(2)--cycle \or 
 \draw[frame, face, fill=mycolor45] (11)--(2)--(0)--cycle \or 
 \draw[frame, face, fill=mycolor46] (0)--(1)--(2)--cycle \or 
 \draw[thickedge] (1) -- (9) \or 
 \draw[thickedge] (5) -- (9) \or 
 \draw[thickedge] (2) -- (10) \or 
 \draw[thickedge] (6) -- (10) \or 
 \draw[thickedge] (0) -- (8) \or 
 \draw[thickedge] (4) -- (8) \or 
 \node at (0) [left] {\mylabel{0}} \or 
 \node at (1) [right] {\mylabel{1}} \or 
 \node at (2) [left] {\mylabel{2}} \or 
 \node at (3) [below] {\mylabel{3}} \or 
 \node at (4) [below right]{\mylabel{4}} \or 
 \node at (5) [right] {\mylabel{5}} \or 
 \node at (6) [left] {\mylabel{6}} \or 
 \node at (7) [above=2pt] {\mylabel{7}} \or 
 \node at (8) [below right] {\mylabel{8}} \or 
 \node at (9) [right] {\mylabel{9}} \or 
 \node at (10) [left] {\mylabel{10}} \or 
 \node at (11) [below right] {\mylabel{11}} \or 
 \fi}
 \foreach \x in {
 1  
 ,12  
 ,13  
 ,15  
 ,14  
 ,2  
 ,46  
 ,32  
 ,30  
 ,44  
 ,45  
 ,43  
 ,40  
 ,39  
 ,38  
 ,41  
 ,42  
 ,37  
 ,23  
 ,22  
 ,25  
 ,35  
 ,36  
 ,34  
 ,33  
 ,18  
 ,20  
 ,21  
 ,19  
 ,24  
 ,28  
 ,26  
 ,29  
 ,31  
 ,27  
 ,64  
 ,10  
 ,11  
 ,6  
 ,7  
 ,4  
 ,5  
 ,3  
 ,9  
 ,8  
 ,17  
 ,16  
 ,0  
 ,50  
 ,49  
 ,47  
 ,48  
 ,51  
 ,52  
 ,62  
 ,59  
 ,56  
 ,55  
 ,54  
 ,63  
 ,60  
 ,58  
 ,53  
 ,61  
 ,57  
 }
 {
 \myentry{\x};
 }
 \end{tikzpicture}
\caption{$P$ (and $P/{\sim}$)}
\label{trimorton}
\end{minipage}
\begin{minipage}[b]{.46\hsize}
\centering
\begin{tikzpicture}[scale=1.3]
\newcommand*{\mylabel}[1]{%
\ifcase #1 0 \or 1 \or 2 \or 3 \or {7=3} \or 10 \or 8 \or 9 \or 11 \or \fi}
\definecolor{mycolor0}{RGB}{255,153,153}
\definecolor{mycolor1}{RGB}{255,168,153}
\definecolor{mycolor2}{RGB}{255,183,153}
\definecolor{mycolor3}{RGB}{255,168,153}
\definecolor{mycolor4}{RGB}{255,183,153}
\definecolor{mycolor5}{RGB}{255,153,153}
\definecolor{mycolor6}{RGB}{255,214,153}
\definecolor{mycolor7}{RGB}{255,229,153}
\definecolor{mycolor8}{RGB}{250,255,153}
\definecolor{mycolor9}{RGB}{235,255,153}
\definecolor{mycolor10}{RGB}{204,255,153}
\definecolor{mycolor11}{RGB}{189,255,153}
\definecolor{mycolor12}{RGB}{173,255,153}
\definecolor{mycolor13}{RGB}{158,255,153}
\definecolor{mycolor14}{RGB}{153,255,163}
\definecolor{mycolor15}{RGB}{153,255,178}
\definecolor{mycolor16}{RGB}{153,255,193}
\definecolor{mycolor17}{RGB}{153,255,209}
\definecolor{mycolor18}{RGB}{153,255,224}
\definecolor{mycolor19}{RGB}{153,255,239}
\definecolor{mycolor20}{RGB}{153,255,255}
\definecolor{mycolor21}{RGB}{153,239,255}
\definecolor{mycolor22}{RGB}{153,224,255}
\definecolor{mycolor23}{RGB}{153,209,255}
\definecolor{mycolor24}{RGB}{153,224,255}
\definecolor{mycolor25}{RGB}{153,209,255}
\definecolor{mycolor26}{RGB}{153,193,255}
\definecolor{mycolor27}{RGB}{153,178,255}
\definecolor{mycolor28}{RGB}{153,163,255}
\definecolor{mycolor29}{RGB}{158,153,255}
\definecolor{mycolor30}{RGB}{173,153,255}
\definecolor{mycolor31}{RGB}{189,153,255}
\definecolor{mycolor32}{RGB}{204,153,255}
\definecolor{mycolor33}{RGB}{219,153,255}
\definecolor{mycolor34}{RGB}{234,153,255}
\definecolor{mycolor35}{RGB}{250,153,255}
\definecolor{mycolor36}{RGB}{255,153,245}
\definecolor{mycolor37}{RGB}{255,153,229}
\definecolor{mycolor38}{RGB}{255,153,214}
\definecolor{mycolor39}{RGB}{255,153,199}
\definecolor{mycolor40}{RGB}{255,153,183}
\coordinate (0) at (0, 0);
\coordinate (1) at (2.67, 1.34);
\coordinate (2) at (-0.5, 1.83);
\coordinate (3) at (0.6, -0.68);
\coordinate (4) at (0.93, 4.07);
\coordinate (5) at (0.64, 2.25);
\coordinate (6) at (0.3, 1.85);
\coordinate (7) at (1.43, 1.93);
\coordinate (8) at (0.77, 1.78);
\newcommand*{\myentry}[1]{%
\ifcase #1 
\draw[frame, face, fill=mycolor0] (0)--(1)--(3)--cycle \or 
\draw[frame, face, fill=mycolor1] (1)--(2)--(3)--cycle \or 
\draw[frame, face, fill=mycolor2] (0)--(2)--(3)--cycle \or 
\draw[frame, face, fill=mycolor3] (1)--(2)--(4)--cycle \or 
\draw[frame, face, fill=mycolor4] (0)--(2)--(4)--cycle \or 
\draw[frame, face, fill=mycolor5] (0)--(1)--(4)--cycle \or 
\draw[frame, face, fill=mycolor6] (0)--(2)--(5)--cycle \or 
\draw[frame, face, fill=mycolor7] (1)--(2)--(5)--cycle \or 
\draw[frame, face, fill=mycolor8] (1)--(2)--(7)--cycle \or 
\draw[frame, face, fill=mycolor9] (0)--(1)--(7)--cycle \or 
\draw[frame, face, fill=mycolor10] (0)--(1)--(6)--cycle \or 
\draw[frame, face, fill=mycolor11] (0)--(2)--(6)--cycle \or 
\draw[frame, face, fill=mycolor12] (5)--(6)--(7)--cycle \or 
\draw[frame, face, fill=mycolor13] (4)--(5)--(6)--cycle \or 
\draw[frame, face, fill=mycolor14] (4)--(6)--(7)--cycle \or 
\draw[frame, face, fill=mycolor15] (4)--(5)--(7)--cycle \or 
\draw[frame, face, fill=mycolor16] (2)--(4)--(6)--cycle \or 
\draw[frame, face, fill=mycolor17] (0)--(4)--(6)--cycle \or 
\draw[frame, face, fill=mycolor18] (1)--(4)--(7)--cycle \or 
\draw[frame, face, fill=mycolor19] (0)--(4)--(7)--cycle \or 
\draw[frame, face, fill=mycolor20] (1)--(4)--(5)--cycle \or 
\draw[frame, face, fill=mycolor21] (2)--(4)--(5)--cycle \or 
\draw[frame, face, fill=mycolor22] (1)--(5)--(7)--cycle \or 
\draw[frame, face, fill=mycolor23] (2)--(5)--(6)--cycle \or 
\draw[frame, face, fill=mycolor24] (0)--(6)--(7)--cycle \or 
\draw[frame, face, fill=mycolor25] (0)--(5)--(6)--cycle \or 
\draw[frame, face, fill=mycolor26] (1)--(6)--(7)--cycle \or 
\draw[frame, face, fill=mycolor27] (2)--(5)--(7)--cycle \or 
\draw[frame, face, fill=mycolor28] (5)--(6)--(8)--cycle \or 
\draw[frame, face, fill=mycolor29] (6)--(7)--(8)--cycle \or 
\draw[frame, face, fill=mycolor30] (5)--(7)--(8)--cycle \or 
\draw[frame, face, fill=mycolor31] (0)--(6)--(8)--cycle \or 
\draw[frame, face, fill=mycolor32] (1)--(6)--(8)--cycle \or 
\draw[frame, face, fill=mycolor33] (1)--(7)--(8)--cycle \or 
\draw[frame, face, fill=mycolor34] (2)--(7)--(8)--cycle \or 
\draw[frame, face, fill=mycolor35] (2)--(5)--(8)--cycle \or 
\draw[frame, face, fill=mycolor36] (0)--(5)--(8)--cycle \or 
\draw[frame, face, fill=mycolor37] (0)--(1)--(8)--cycle \or 
\draw[frame, face, fill=mycolor38] (1)--(2)--(8)--cycle \or 
\draw[frame, face, fill=mycolor39] (0)--(2)--(8)--cycle \or 
\draw[frame, face, fill=mycolor40] (0)--(1)--(2)--cycle \or 
\draw[thickedge] (1) -- (6) \or 
\draw[thickedge] (2) -- (6) \or 
\draw[thickedge] (2) -- (7) \or 
\draw[thickedge] (0) -- (7) \or 
\draw[thickedge] (0) -- (5) \or 
\draw[thickedge] (1) -- (5) \or 
\node at (0) [left] {\mylabel{0}} \or 
\node at (1) [right] {\mylabel{1}} \or 
\node at (2) [left] {\mylabel{2}} \or 
\node at (3) [below] {\mylabel{3}} \or 
\node at (4) [above] {\mylabel{4}} \or 
\node at (5) [right] {\mylabel{5}} \or 
\node at (6) [above left] {\mylabel{6}} \or 
\node at (7) [above right] {\mylabel{7}} \or 
\node at (8) [below right] {\mylabel{8}} \or 
\fi}
\foreach \x in {
50  
,49  
,1  
,3  
,2  
,40  
,7  
,21  
,20  
,46  
,22  
,8  
,27  
,43  
,38  
,34  
,39  
,35  
,6  
,23  
,37  
,36  
,25  
,45  
,13  
,15  
,30  
,28  
,29  
,33  
,31  
,32  
,26  
,10  
,41  
,12  
,14  
,24  
,18  
,11  
,9  
,16  
,42  
,17  
,19  
,44  
,52  
,55  
,54  
,53  
,0  
,4  
,5  
,48  
,47  
,51  
}
{
\myentry{\x};
}
\end{tikzpicture}
\caption{$S_B^3$}
\label{barnette-sphere}
\end{minipage}
\end{figure}

\begin{figure}[ht]
\centering
\begin{tikzpicture}[scale=1.4]
\newcommand*{\mylabel}[1]{%
\ifcase #1 0 \or 1 \or 2 \or 3 \or {7=3} \or 10 \or 8 \or 9 \or 11 \or \fi}
\definecolor{mycolor0}{RGB}{255,153,153}
\definecolor{mycolor1}{RGB}{255,168,153}
\definecolor{mycolor2}{RGB}{255,183,153}
\definecolor{mycolor3}{RGB}{255,168,153}
\definecolor{mycolor4}{RGB}{255,183,153}
\definecolor{mycolor5}{RGB}{255,153,153}
\definecolor{mycolor6}{RGB}{255,214,153}
\definecolor{mycolor7}{RGB}{255,229,153}
\definecolor{mycolor8}{RGB}{250,255,153}
\definecolor{mycolor9}{RGB}{235,255,153}
\definecolor{mycolor10}{RGB}{204,255,153}
\definecolor{mycolor11}{RGB}{189,255,153}
\definecolor{mycolor12}{RGB}{173,255,153}
\definecolor{mycolor13}{RGB}{158,255,153}
\definecolor{mycolor14}{RGB}{153,255,163}
\definecolor{mycolor15}{RGB}{153,255,178}
\definecolor{mycolor16}{RGB}{153,255,193}
\definecolor{mycolor17}{RGB}{153,255,209}
\definecolor{mycolor18}{RGB}{153,255,224}
\definecolor{mycolor19}{RGB}{153,255,239}
\definecolor{mycolor20}{RGB}{153,255,255}
\definecolor{mycolor21}{RGB}{153,239,255}
\definecolor{mycolor22}{RGB}{153,224,255}
\definecolor{mycolor23}{RGB}{153,209,255}
\definecolor{mycolor24}{RGB}{153,224,255}
\definecolor{mycolor25}{RGB}{153,209,255}
\definecolor{mycolor26}{RGB}{153,193,255}
\definecolor{mycolor27}{RGB}{153,178,255}
\definecolor{mycolor28}{RGB}{153,163,255}
\definecolor{mycolor29}{RGB}{158,153,255}
\definecolor{mycolor30}{RGB}{173,153,255}
\definecolor{mycolor31}{RGB}{189,153,255}
\definecolor{mycolor32}{RGB}{204,153,255}
\definecolor{mycolor33}{RGB}{219,153,255}
\definecolor{mycolor34}{RGB}{234,153,255}
\definecolor{mycolor35}{RGB}{250,153,255}
\definecolor{mycolor36}{RGB}{255,153,245}
\definecolor{mycolor37}{RGB}{255,153,229}
\definecolor{mycolor38}{RGB}{255,153,214}
\definecolor{mycolor39}{RGB}{255,153,199}
\definecolor{mycolor40}{RGB}{255,153,183}
\coordinate (0) at (0, 0);
\coordinate (1) at (2.67, 1.34);
\coordinate (2) at (-0.5, 1.83);
\coordinate (3) at (0.6, -0.68);
\coordinate (4) at (0.93, 4.07);
\coordinate (5) at (0.64, 2.25);
\coordinate (6) at (0.3, 1.85);
\coordinate (7) at (1.43, 1.99);
\coordinate (8) at (0.77, 1.78);
\newcommand*{\myentry}[1]{%
\ifcase #1 
\draw[frame, face, fill=mycolor0] (0)--(1)--(3)--cycle \or 
\draw[frame, face, fill=mycolor1] (1)--(2)--(3)--cycle \or 
\draw[frame, face, fill=mycolor2] (0)--(2)--(3)--cycle \or 
\draw[frame, face, fill=mycolor3] (1)--(2)--(4)--cycle \or 
\draw[frame, face, fill=mycolor4] (0)--(2)--(4)--cycle \or 
\draw[frame, face, fill=mycolor5] (0)--(1)--(4)--cycle \or 
\draw[frame, face, fill=mycolor6] (0)--(2)--(5)--cycle \or 
\draw[frame, face, fill=mycolor7] (1)--(2)--(5)--cycle \or 
\draw[frame, face, fill=mycolor8] (1)--(2)--(7)--cycle \or 
\draw[frame, face, fill=mycolor9] (0)--(1)--(7)--cycle \or 
\draw[frame, face, fill=mycolor10] (0)--(1)--(6)--cycle \or 
\draw[frame, face, fill=mycolor11] (0)--(2)--(6)--cycle \or 
\draw[frame, face, fill=mycolor12] (5)--(6)--(7)--cycle \or 
\draw[frame, face, fill=mycolor13] (4)--(5)--(6)--cycle \or 
\draw[frame, face, fill=mycolor14] (4)--(6)--(7)--cycle \or 
\draw[frame, face, fill=mycolor15] (4)--(5)--(7)--cycle \or 
\draw[frame, face, fill=mycolor16] (2)--(4)--(6)--cycle \or 
\draw[frame, face, fill=mycolor17] (0)--(4)--(6)--cycle \or 
\draw[frame, face, fill=mycolor18] (1)--(4)--(7)--cycle \or 
\draw[frame, face, fill=mycolor19] (0)--(4)--(7)--cycle \or 
\draw[frame, face, fill=mycolor20] (1)--(4)--(5)--cycle \or 
\draw[frame, face, fill=mycolor21] (2)--(4)--(5)--cycle \or 
\draw[frame, face, fill=mycolor22] (1)--(5)--(7)--cycle \or 
\draw[frame, face, fill=mycolor23] (2)--(5)--(6)--cycle \or 
\draw[frame, face, fill=mycolor24] (0)--(6)--(7)--cycle \or 
\draw[frame, face, fill=mycolor25] (0)--(5)--(6)--cycle \or 
\draw[frame, face, fill=mycolor26] (1)--(6)--(7)--cycle \or 
\draw[frame, face, fill=mycolor27] (2)--(5)--(7)--cycle \or 
\draw[frame, face, fill=mycolor28] (5)--(6)--(8)--cycle \or 
\draw[frame, face, fill=mycolor29] (6)--(7)--(8)--cycle \or 
\draw[frame, face, fill=mycolor30] (5)--(7)--(8)--cycle \or 
\draw[frame, face, fill=mycolor31] (0)--(6)--(8)--cycle \or 
\draw[frame, face, fill=mycolor32] (1)--(6)--(8)--cycle \or 
\draw[frame, face, fill=mycolor33] (1)--(7)--(8)--cycle \or 
\draw[frame, face, fill=mycolor34] (2)--(7)--(8)--cycle \or 
\draw[frame, face, fill=mycolor35] (2)--(5)--(8)--cycle \or 
\draw[frame, face, fill=mycolor36] (0)--(5)--(8)--cycle \or 
\draw[frame, face, fill=mycolor37] (0)--(1)--(8)--cycle \or 
\draw[frame, face, fill=mycolor38] (1)--(2)--(8)--cycle \or 
\draw[frame, face, fill=mycolor39] (0)--(2)--(8)--cycle \or 
\draw[frame, face, fill=mycolor40] (0)--(1)--(2)--cycle \or 
\draw[thickedge] (1) -- (6) \or 
\draw[thickedge] (2) -- (6) \or 
\draw[thickedge] (2) -- (7) \or 
\draw[thickedge] (0) -- (7) \or 
\draw[thickedge] (0) -- (5) \or 
\draw[thickedge] (1) -- (5) \or 
\node at (0) [left] {\mylabel{0}} \or 
\node at (1) [right] {\mylabel{1}} \or 
\node at (2) [left] {\mylabel{2}} \or 
\node at (3) [below] {\mylabel{3}} \or 
\node at (4) [above] {\mylabel{4}} \or 
\node at (5) [right] {\mylabel{5}} \or 
\node at (6) [above left] {\mylabel{6}} \or 
\node at (7) [above right] {\mylabel{7}} \or 
\node at (8) [below right] {\mylabel{8}} \or 
\fi}
\foreach \x in {
49  
,7  
,46  
,8  
,43  
,6  
,45  
,10  
,41  
,11  
,9  
,42  
,44  
,52  
,54  
,53  
,48  
,47  
}
{
\myentry{\x};
}
\end{tikzpicture}
\caption{$S/{\sim}$ and $D/{\sim}$}
\label{barnette_mobius}
\end{figure}

\noindent\textit{Department of Mathematical Sciences, 
University of the Ryukyus\\
Nishihara-cho, Okinawa 903-0213, 
Japan}

\end{document}